\newcommand{\bc}{\begin{center}}
\newcommand{\ec}{\end{center}}
\newcommand{\qed}{\enspace\vrule  height6pt  width4pt  depth2pt}
\newenvironment{proof}{\par\noindent{\bf Proof.}}{$\qed$\par\bigskip}
\newtheorem{theorem}{Theorem}[section]
\newtheorem{lemma}[theorem]{Lemma}
\newtheorem{corollary}[theorem]{Corollary}
\begin{document}

\title{Hyperbolicity of Semigroup Algebras
II\thanks{Mathematics subject Classification Primary
[$16U60$, $20M25$]; Secondary [$20M10$, $16S36$].
Keywords and phrases: Semigroup, Semigroup Algebras,
Hyperbolic Groups, Group Rings, Units.\newline This
research is supported in part by Onderzoeksraad of
Vrije Universiteit Brussel, Fonds voor
Wetenschappelijk Onderzoek (Flanders) and
Flemish-Polish bilateral agreement BIL2005/VUB/06.}}
\author{E. Iwaki \and E. Jespers \and S. O. Juriaans \and A. C. Souza Filho}
\date{}
\maketitle

\begin{abstract}
In 1996 Jespers and Wang classified finite
semigroups whose integral semigroup ring has 
finitely many units. In a recent paper,
Iwaki-Juriaans-Souza Filho continued this line of
research by partially classifying the finite
semigroups whose rational semigroup algebra
%over a field of characteristic zero,
contains a ${\mathbb{Z}}$-order with hyperbolic unit
group. In this paper we complete this
classification by handling the case in which the
semigroup is semi-simple.
\end{abstract}

% etc
% \thanks is optional - remove next line if not needed
%\thanks{\emph{Present address:} Insert the address
%\date{Received: date / Revised version: date}
% The correct dates will be entered by the editor

\section{Introduction}

In this paper we continue the investigations on the
hyperbolic property. Recall (\cite{ijsf}) that a
unital $\mathbb{Q}$-algebra $A$ is said to have the
hyperbolic property if the unit group
${\mathcal{U}}(\Gamma )$ of every $\mathbb{Z}$-order
$\Gamma$ in $A$ does not contain a free abelian
subgroup ${\mathbb{Z}}^2$ of rank $2$.  If $A$ has
finite ${\mathbb{Q}}$-dimension, then having the
hyperbolic property is equivalent  to the unit group
of ${\mathbb{Z}}$-orders being hyperbolic groups in
the sense of Gromov \cite{gromov} and it is
sufficient to verify the required condition on only
one $\mathbb{Z}$-order in $A$.

The finite groups $G$ for which its rational group
algebra $\mathbb{Q}G$ is hyperbolic have been
characterized in \cite{jsp}. In \cite{jpp} this
problem has been dealt with for rational group
algebras of arbitrary groups and in \cite{jpsf} one
deals with groups algebras of finite groups over
quadratic extensions of the rationals. A natural
question is to extend these results to the much
wider class of (unital) rational semigroup algebras
of finite semigroups. A first step in this direction
is given in \cite{gdf} where the finite semigroups
$S$ are classified whose integral semigroup ring
$\mathbb{Z}S$ has trivial units. In a recent paper
(\cite{ijsf}), Iwaki-Juriaans-Souza Filho classified
the finite semigroups $S$ such that ${\mathbb{Q}} S$
has the hyperbolic property, this provided that
either $\mathbb{Q}S$ is semi-simple or $S$ is not a
semi-simple semigroup. See also \cite{jposf} and
\cite{thss} for results in the same direction.

In this paper we complete the classification started
in \cite{ijsf}. In particular, we need to handle the
case in which $S$ is semi-simple, that is, every
principal factor of $S$ is simple or $0$-simple and
$\mathbb{Q}S$ is not semi-simple.  The reader is
referred to \cite{clfrd,oknbook} for background on
semigroups and semigroup algebras. For convenience
sake, we recall that the contracted semigroup
algebra of a semigroup with zero element $\theta$ is
denoted by $\mathbb{Q}_{0}S$ and defined as
$\mathbb{Q}S/\mathbb{Q}\theta$. Since any semigroup
algebra can be considered as a contracted semigroup
algebra, we will state our results in this more
general context. Without specifically stating this,
throughout the paper we assume that
$\mathbb{Q}_{0}S$ is unital. Also recall that a
finite $0$-simple semigroup is isomorphic with a
Rees matrix semigroup
${\mathcal{M}}^{0}(G^{\theta},m,n;P)$, where $n$ and
$m$ are positive integers and $G$ is a finite group.
This is by definition the set of all $m\times
n$-matrices over $G^{\theta}$, the group $G$ with a
zero $\theta$ adjoined, with at all entries in
$G^{\theta}$ and at most one entry different from
$\theta$. The matrix $P$ is an $n\times m$-matrix
with entries in $G^{\theta}$ and it is regular
(i.e., each row and column contains at least one
non-zero element). The product of $A,B\in
{\mathcal{M}}^{0}(G^{\theta},m,n;P)$ is $A\circ P
\circ B$, where $\circ$ denotes the usual matrix
product.
%Recall that a finite semigroup $S$ is said to be
%semi-simple if the factors of $S$ are simple or
%$0$-simple semigroup. A well known result is that if
%the algebra ${\mathbb{Q}} S$ is semi-simple then $S$
%is semi-simple. Clearly, if $S$ is not semi-simple
%the algebra ${\mathbb{Q}} S$ is not semi-simple.
The Jacobson radical of a ring $R$ we denote by
$J(R)$. An example that will be of importance in our
classification is the Rees matrix semigroup
$T={\mathcal{M}^{0}} (\{ 1 \}, 1, 2,P)$ with
$P=\left(
\begin{array}{l}
1 \\ 1
\end{array}
\right)$ . Clearly, $T=\{ e,f, \theta\}$ with
$ef=f$, $e^{2}=e$, $f^{2}=f$, $fe=e$,
$\theta^{2}=\theta$, and $e\theta = \theta e
=f\theta =\theta f =\theta$. By $T^{1}$ we denote
the smallest monoid containing $T$, thus
$T^{1}=T\cup \{ 1\}$, with $t1=1t$ for all $t\in T$.
Clearly, $J(\mathbb{Q}_{0}T^{1})=\mathbb{Q}(e-f)$
and
$\mathbb{Q}_{0}T^{1}=\mathbb{Q}(e-f)+\mathbb{Q}(1-e)+\mathbb{Q}(e)\cong
T_{2}(\mathbb{Q})$, the upper triangular $2\times
2$-matrices over $\mathbb{Q}$. So the semigroup
algebra $\mathbb{Q}_{0}T^{1}$ has the hyperbolic
property. We will show, in some sense, that this is
precisely the only case not covered by the main
result of \cite{ijsf}.

\section{Hyperbolic semigroup algebras}

For the convenience of the reader we recall the main result in \cite{ijsf}
on the structure of finite dimensional algebras with the hyperbolic property.

\begin{theorem}
\cite[Theorem $3.1$]{ijsf} \label{tfadfh} A finite
dimensional ${\mathbb{Q}}$ -algebra $A$ has the
hyperbolic property if and only if one of the
following holds:
\begin{enumerate}
\item $J(A)=\{ 0\}$ and $A =\oplus_{i} D_{i} \oplus B$,
\item $\dim_{{\mathbb{Q}}} J(A)=1$ and $A/J(A) =\oplus_{i} D_{i}$,
\end{enumerate}
where each $D_{i}$ is either $\mathbb{Q}$, a quadratic imaginary extension
of ${\mathbb{Q}}$ or is a totally definite quaternion algebra over
the rationals and where either $B \in \{ \{ 0\}, M_{2}(\mathbb{Q})\}$ or $B$
has a ${\mathbb{Z}}$-order whose unit group is non-torsion and hyperbolic.

In case $A$ has the hyperbolic property with
$J(A)\neq \{ 0\}$ then either $J(A)$ is central in
$A$ or $A$ is a direct product of division algebras
and $T_{2} ({\mathbb{Q}})$, the $2\times 2$-upper
triangular matrices over ${\mathbb{Q}}$.
\end{theorem}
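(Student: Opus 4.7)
The plan is to exploit that in finite $\mathbb{Q}$-dimension the hyperbolic property is equivalent to Gromov-hyperbolicity of $\mathcal{U}(\Gamma)$ for a single $\mathbb{Z}$-order $\Gamma$ in $A$, and to split into the semisimple case $J(A)=0$ and the singular case $J(A)\neq 0$, handling sufficiency and necessity separately within each. For sufficiency in (1) I would take $\Gamma=\bigoplus_i\Gamma_i\oplus\Gamma_B$ with $\Gamma_i$ a maximal $\mathbb{Z}$-order in $D_i$; Dirichlet's unit theorem (for $\mathbb{Q}$ and imaginary quadratic $D_i$) together with the classical fact that totally definite rational quaternion orders have finite unit groups makes each $\mathcal{U}(\Gamma_i)$ finite, so $\mathcal{U}(\Gamma)$ is commensurable with $\mathcal{U}(\Gamma_B)$ and inherits its hyperbolicity. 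For sufficiency in (2), $\dim_{\mathbb{Q}}J(A)=1$ and the nilpotency of $J(\Gamma)$ force $J(\Gamma)^2=0$ and $1+J(\Gamma)\cong\mathbb{Z}$; the extension
\[
1\to 1+J(\Gamma)\to \mathcal{U}(\Gamma)\to \mathcal{U}(\Gamma/J(\Gamma))\to 1
\]
has finite quotient (because $\Gamma/J(\Gamma)$ is an order in $\bigoplus_i D_i$), so $\mathcal{U}(\Gamma)$ is virtually $\mathbb{Z}$ and in particular hyperbolic.

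For necessity in the semisimple case, Wedderburn gives $A=\bigoplus_i M_{n_i}(D_i)$. If two simple factors carry non-torsion units of orders, then a pair of commuting non-torsion elements produces a $\mathbb{Z}^2\subset \mathcal{U}(\Gamma)$, so at most one simple factor has infinite order-unit group. Combining this with the classification in \cite{jsp,jpp} of those simple rational algebras whose $\mathbb{Z}$-order unit groups are finite, respectively non-elementary hyperbolic, the finite-unit factors are precisely $\mathbb{Q}$, imaginary quadratic extensions and totally definite rational quaternion algebras, while the one admissible $B$ lies in $\{\{0\},M_2(\mathbb{Q})\}$ or in the short list of hyperbolic examples identified in \cite{jpp}.

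The main obstacle is necessity in the non-semisimple case. Here $1+J(\Gamma)$ is a finitely generated nilpotent subgroup of $\mathcal{U}(\Gamma)$ whose torsion-free rank equals $\dim_{\mathbb{Q}}J(A)$, so rank $\geq 2$ already embeds $\mathbb{Z}^2$ and is excluded, forcing $\dim_{\mathbb{Q}}J(A)=1$. One must still preclude a non-torsion element of $\mathcal{U}(A/J(A))$ coexisting with the generator of $1+J(\Gamma)$: lift any such element to $u\in\mathcal{U}(\Gamma)$ and observe that its conjugation action on the one-dimensional $J(A)$ factors through a finite group of scalars, so a suitable power of $u$ commutes with $1+J(\Gamma)$ and again yields $\mathbb{Z}^2$; hence $A/J(A)=\bigoplus_i D_i$ as required. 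For the concluding dichotomy of the moreover clause, non-centrality of $J(A)$ forces some semisimple summand of $A/J(A)$ to act non-trivially by conjugation on the one-dimensional $J(A)$; a Peirce decomposition relative to a primitive idempotent witnessing this action identifies the corresponding summand of $A$ with $T_2(\mathbb{Q})$, leaving the remaining summands as division algebras from the list, while the alternative places $J(A)$ in the centre.
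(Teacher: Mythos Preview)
The paper does not prove this theorem: it is quoted verbatim as \cite[Theorem~3.1]{ijsf} and introduced with ``For the convenience of the reader we recall the main result in \cite{ijsf}''. There is therefore no proof in the present paper to compare your proposal against; the argument lives entirely in the cited reference.

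That said, your outline is a reasonable reconstruction of how such a result is established. The sufficiency arguments are standard and correct: in case~(2), $\dim_{\mathbb{Q}}J(A)=1$ together with nilpotency indeed forces $J(A)^{2}=0$, so $1+J(\Gamma)\cong\mathbb{Z}$ and $\mathcal{U}(\Gamma)$ is virtually cyclic. For necessity in the non-semisimple case your key step is right: conjugation by $u\in\mathcal{U}(\Gamma)$ must preserve the rank-one lattice $J(\Gamma)$, hence acts through $\{\pm 1\}$, so $u^{2}$ centralises $1+J(\Gamma)$ and, if $u$ has infinite order modulo $J(\Gamma)$, one obtains $\mathbb{Z}^{2}$. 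One point you might tighten is the claim that the torsion-free rank of $1+J(\Gamma)$ equals $\dim_{\mathbb{Q}}J(A)$: this is immediate when $J(A)^{2}=0$, but in general $1+J(\Gamma)$ is only a nilpotent group whose Hirsch length equals $\dim_{\mathbb{Q}}J(A)$; that still suffices, since Hirsch length $\geq 2$ already yields a $\mathbb{Z}^{2}$. The Peirce-decomposition sketch for the ``moreover'' clause is in the right spirit but would need to be fleshed out to pin down $T_{2}(\mathbb{Q})$ precisely; in any event, for a genuine comparison you would have to consult \cite{ijsf} rather than the present paper.
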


Note that required conditions on the division algebras $D_{i}$ say that the
unit group of an order in $D_{i}$ has to be finite.

An obvious consequence of the Theorem is that ideals $I$ of finite
dimensional hyperbolic rational algebras $A$ also have an algebraic
structure as described in 1. or 2. of Theorem~\ref{tfadfh}. Here we consider
$I$ as a $K$-algebra, but it does not necessarily contain an identity.
Abusing terminology, we also will say that such algebras are hyperbolic.
Since these facts are crucial for the proof of our description of hyperbolic
rational semigroup algebras we formulate them in a corollary.

\begin{corollary}
\label{homimage} Let $A$ be a finite dimensional
rational algebra. If $A$ is hyperbolic then
epimorphic images and ideals of $A$ are hyperbolic.
In particular, if $J(I)=J(A)\cap I= \{ 0\}$ then $I$
has an identity and thus $A\cong I\times (A/I)$, a
direct product of algebras.
\end{corollary}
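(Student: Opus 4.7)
The plan is to invoke the structural classification in Theorem~\ref{tfadfh} and split into cases on the radical. Write $J=J(A)$. If $J=0$ then $A$ is a finite direct sum of simple algebras of the types listed in item~(1); every two-sided ideal $I$ is a direct sum of a subset of these simple components, and $A/I$ is the direct sum of the complementary components, so both $I$ and $A/I$ trivially still have the form of item~(1).

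Suppose now $\dim_{\mathbb{Q}} J=1$. For any two-sided ideal $I$, the intersection $J\cap I$ is a subspace of the one-dimensional $J$, giving the dichotomy $J\cap I=0$ or $J\subseteq I$. If $J\subseteq I$, then $I/J$ is an ideal of the semisimple algebra $A/J=\oplus_i D_i$, hence a direct sum of a subset of the $D_i$; this places $I$ under item~(2) with $J(I)=J$, while $A/I$ is a quotient of the semisimple $A/J$ and falls under item~(1). If $J\cap I=0$, the projection $\pi\colon A\to A/J$ restricts to an injection on $I$, whose image is an ideal of $A/J$ and therefore a direct sum of some $D_i$; hence $I$ is semisimple of the form in item~(1), and $A/I$ has the one-dimensional radical $(J+I)/I$ with semisimple quotient $A/(J+I)$, placing it under item~(2). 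The same reasoning covers arbitrary quotients $A/K$, since either $J\subseteq K$ (making $A/K$ a quotient of $A/J$, which is semisimple) or $J\cap K=0$ (making $A/K$ inherit a one-dimensional radical).

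For the final assertion, assume $J(I)=J\cap I=0$. By the case analysis above, $I$ is a direct sum of simple algebras and therefore possesses an identity $e$, which is an idempotent of $A$. The key step is to show that $e$ is central in $A$: for every $a\in A$, both $ae$ and $ea$ lie in $I$ because $I$ is two-sided, and since $e$ acts as the identity of $I$, applying $e$ on the appropriate side gives $ae=eae$ and $ea=eae$, so $ae=ea$. Once centrality is in hand, $A=eA\oplus(1-e)A$ is a decomposition into two-sided ideals with $eA=I$ (because $e\in I$ and $I=eI\subseteq eA$) and $(1-e)A\cong A/I$, establishing $A\cong I\times(A/I)$.

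I expect the main obstacle to be just this centrality argument; the rest is bookkeeping with the structure given by Theorem~\ref{tfadfh}. A minor point to double-check is the case in which $J$ is not central, so $A\cong\prod_j D_j\times T_2(\mathbb{Q})$: here one verifies directly that the ideals of $T_2(\mathbb{Q})$ (namely $0$, $J(T_2(\mathbb{Q}))$, the two $2$-dimensional ideals containing $J(T_2(\mathbb{Q}))$, and $T_2(\mathbb{Q})$ itself) each fit items~(1) or (2), reducing the global verification to a finite inspection.
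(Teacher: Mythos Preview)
Your argument is correct. The paper does not actually supply a proof of this corollary: it simply declares the result an ``obvious consequence'' of Theorem~\ref{tfadfh} and moves on. Your case analysis on $\dim_{\mathbb{Q}} J(A)$ and on whether $J\cap I$ vanishes is precisely the verification the paper leaves implicit, and your centrality argument for the identity $e$ of $I$ is the standard way to obtain the splitting $A\cong I\times(A/I)$. The extra check you flag for $T_2(\mathbb{Q})$ is harmless but unnecessary: in that case every nonzero two-sided ideal of the $T_2(\mathbb{Q})$ factor contains its radical, so the dichotomy $J\cap I=0$ versus $J\subseteq I$ already covers it without a separate inspection.
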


%\begin{proof}
%We just have to prove that epimorphic images have
%the hyperbolic property. Let $I$ be any ideal of $A$
%and let $I^{\prime }$ be an ideal of $I$ (so
%here we consider $I$ as a ring). Then the ring $I/I^{\prime }$ is such that $%
%dim_{{\mathbb{Q}}}(J(I/I^{\prime }))\leq 1$ and
%$I/I^{\prime }/J(I/I^{\prime })$ is a direct product
%of division algebras of the type $D_{i}$ listed in
%the above theorem. That, is epimorphic images of
%ideals of $I$ inherit the mentioned algebra
%structure.
%\end{proof}

\begin{lemma}
\label{reesdescription} Let $G$ be a finite group
and $S$ a Rees semigroup
$S={\mathcal{M}^{0}}(G;m,n;P)$ with regular sandwich
matrix $P=(p_{i,j})$. If ${\mathbb{Q}}_0 S$ has the
hyperbolic property then $n m =1, 2$ or $4$.
Furthermore, if $nm\neq 1$ then $G=\{1\}$ and $nm=2$
or $n=m=2$ and $P\neq \left(
\begin{array}{ll}
1 & 1 \\ 1 & 1
\end{array}
\right)$. If $nm=2$ then $J(\mathbb{Q}_{0} S) \neq
\{ 0\}$ and $\mathbb{Q}_{0} S/J(\mathbb{Q}_{0}
S)\cong \mathbb{Q}$, and if $n=m=2$ then
$\mathbb{Q}_{0}S \cong M_{2}(\mathbb{Q})$.
\end{lemma}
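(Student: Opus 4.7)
My plan is to identify $\mathbb{Q}_{0}S$ with the Munn algebra $\mathcal{M}(\mathbb{Q}G;m,n;P)$, namely $M_{m,n}(\mathbb{Q}G)$ equipped with the product $A\ast B = APB$; this is the standard isomorphism for contracted semigroup algebras of Rees matrix semigroups over a group (see \cite{oknbook}). The key observation is that the maps $\phi_{r}:A\mapsto AP\in M_{m}(\mathbb{Q}G)$ and $\phi_{\ell}:A\mapsto PA\in M_{n}(\mathbb{Q}G)$ are ring homomorphisms whose kernels $N_{r}=\{A:AP=0\}$ and $N_{\ell}=\{A:PA=0\}$ are square-zero two-sided ideals, hence contained in $J(\mathbb{Q}_{0}S)$. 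Writing the Wedderburn decomposition $\mathbb{Q}G=\bigoplus_{k}M_{d_{k}}(D_{k})$ and letting $\rho_{k}$ denote the $D_{k}$-rank of the corresponding block $P_{k}\in M_{nd_{k},md_{k}}(D_{k})$ of $P$, a direct count gives $\dim_{\mathbb{Q}}N_{r}=\sum_{k}md_{k}(nd_{k}-\rho_{k})[D_{k}:\mathbb{Q}]$, with the symmetric formula for $\dim_{\mathbb{Q}}N_{\ell}$.

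In the semisimple case $J(\mathbb{Q}_{0}S)=\{0\}$, the vanishing $N_{r}=N_{\ell}=\{0\}$ forces $\rho_{k}=md_{k}=nd_{k}$ for every $k$, hence $n=m$ and $P$ invertible over $\mathbb{Q}G$. Then $\mathbb{Q}_{0}S\cong M_{n}(\mathbb{Q}G)\cong\bigoplus_{k}M_{nd_{k}}(D_{k})$, and Theorem~\ref{tfadfh}(1) demands each block be either a division algebra ($nd_{k}=1$) or the unique summand $M_{2}(\mathbb{Q})$ ($nd_{k}=2$, $D_{k}=\mathbb{Q}$). For $n\geq 3$ no block qualifies; for $n=2$ only one block can survive, forcing $G=\{1\}$ and $\mathbb{Q}_{0}S\cong M_{2}(\mathbb{Q})$. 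In that subcase $P$ must be an invertible $2\times 2$ matrix with $\{0,1\}$-entries and no zero row or column, and the only regular such matrix failing invertibility is the all-ones matrix, yielding the stated exclusion.

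In the non-semisimple case, Theorem~\ref{tfadfh}(2) yields $\dim_{\mathbb{Q}}J=1$, so $\dim_{\mathbb{Q}}N_{r}\leq 1$ and $\dim_{\mathbb{Q}}N_{\ell}\leq 1$. If $n=m\geq 2$ then $P$ must be non-invertible over $\mathbb{Q}G$ (else we are in the semisimple case), so some $\rho_{k_{0}}<nd_{k_{0}}$ contributes at least $nd_{k_{0}}\geq 2$ to $\dim_{\mathbb{Q}}N_{r}$, a contradiction; the case $n=m=1$ is semisimple, since then $P$ is a single unit. Hence $n\neq m$, and without loss of generality $n>m$; each nonzero term of $\dim_{\mathbb{Q}}N_{r}$ is at least $md_{k}(n-m)d_{k}[D_{k}:\mathbb{Q}]\geq 1$, and the bound $\leq 1$ forces $m=d_{k}=1$, $n-m=1$, $[D_{k}:\mathbb{Q}]=1$ for a unique surviving Wedderburn component. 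Thus $G=\{1\}$, $(n,m)=(2,1)$, $\dim_{\mathbb{Q}}\mathbb{Q}_{0}S=2$, $J=N_{r}$, and $\mathbb{Q}_{0}S/J\cong\mathbb{Q}$; the case $m>n$ gives $(n,m)=(1,2)$ symmetrically. The chief technical obstacle is the component-wise rank analysis of $P$ over a non-simple $\mathbb{Q}G$, which the Wedderburn decomposition handles cleanly; once the dimension formulas are in place, the arithmetic closes tightly in every case.
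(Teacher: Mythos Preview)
Your argument is correct and takes a genuinely different route from the paper. The paper first passes to the quotient semigroup $\overline{S}=\mathcal{M}^{0}(\{1\};m,n;\overline{P})$ obtained by collapsing $G$ to the trivial group, uses Corollary~\ref{homimage} to inherit hyperbolicity, and applies the known description of Munn algebras over $\mathbb{Q}$ to pin down $mn\in\{1,2,4\}$. It then returns to $S$ and, in the case $nm=2$, exhibits by hand the nilpotent elements $(gp_{2,1},1,1)-(g,1,2)$ for each $g\in G$; the bound $\dim_{\mathbb{Q}}J\leq 1$ then forces $|G|=1$. The $n=m=2$ case is treated via the explicit isomorphism $A\mapsto A\circ P$ onto $M_{2}(\mathbb{Q}G)$.

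You instead stay with $\mathbb{Q}_{0}S$ throughout, split $\mathbb{Q}G$ by Wedderburn into blocks $M_{d_{k}}(D_{k})$, and control everything through the dimension formulas for the square-zero ideals $N_{r}$ and $N_{\ell}$. This replaces both the reduction-to-trivial-group trick and the explicit nilpotent computations by a uniform rank count; the constraint $|G|=1$ drops out from the requirement that a single block with $d_{k}=1$ and $D_{k}=\mathbb{Q}$ survive. The trade-off is that you invoke slightly more machinery (Wedderburn for $\mathbb{Q}G$ and rank over division rings), whereas the paper's proof is more elementary but less systematic. One small point worth making explicit: your appeal to Theorem~\ref{tfadfh}(1) to exclude simple blocks $M_{r}(D)$ with $r\geq 2$ other than $M_{2}(\mathbb{Q})$ uses that such blocks always contain a free abelian $\mathbb{Z}^{2}$ of unipotents in any order; this is standard, but is not literally part of the statement of Theorem~\ref{tfadfh} as quoted.
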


\begin{proof}
Normalizing $P$, we may without loss of generality
assume that $p_{1,1}=1$. There is a natural
epimorphism $S={\mathcal{M}^{0}} (G;m,n;P)
\longrightarrow \overline{S}={\mathcal{M}^{0}}
(\{1\};m,n;{\overline{P}})$, where ${\overline{P}}$
is the matrix obtained from $P$ by replacing an
entry $g\in G$ by $1$. We thus obtain a natural
$\mathbb{Q}$-algebra epimorphism of contracted
semigroup algebras ${\mathbb{Q}}_{0} S
\longrightarrow {\mathbb{Q}}_{0}\overline{S}$.
Because of Corollary~\ref{homimage}, also the
algebra ${\mathbb{Q}}_{0} S$ has the hyperbolic
property. Recall the well known fact that
${\mathbb{Q}}_0{\mathcal{M}^{0}} (\{
1\};m,n;\overline{P})\cong
{\mathcal{M}}(\mathbb{Q},n,m;\overline{P})$, a Munn
algebra over the rationals. Hence (see
\cite[Theorem~3.6]{lp} or
\cite[Lemma~5.21]{oknbook}),
$\mathbb{Q}_{0}\overline{S}/J(\mathbb{Q}_{0}
\overline{S}) \cong M_{2}(\mathbb{Q})_{t}$, where
$t$ is the rank of $\overline{P}$. Because of
Theorem~\ref{tfadfh}, we then also know that $t=1$
if the radical is non-trivial and $t=2$ otherwise.
As also $\dim_{\mathbb{Q}}J(\mathbb{Q}_{0}
\overline{S})\leq 2$ we thus obtain that $mn-1= 1$
or $n m =4$ respectively. The former yields $mn=2$.

Now, as usual, we denote a non-zero matrix in $S$ as
$(g,i,j)$, where $g\in G $ is the only non-zero
entry in position $(i,j)$. All matrices of the type
$(\theta , i,j)$ are identified with the zero
element (also denoted by $\theta $) of $S$. Hence
the product in $S$ is as follows: $(g,i,j)\,
(h,k,l)=(gp_{j,k}h,i,l)$.

We now exclude the case that $m=1$ and $n=4$ (and
similarly, $m=4$ and $n=1$ is excluded). Suppose the
contrary, then $(1,1,1)-(1,1,2)$ and
$(1,1,1)-(1,1,3)$ are linearly independent nilpotent
elements of the hyperbolic algebra
$\mathbb{Q}_{0}(\overline{S})$. However, this is in
contradiction with the dimension of the Jacobson
radical being at most one.

If $n=m=2$ (and thus
$J(\mathbb{Q}_{0}(\overline{S}))=\{ 0\}$) then the
mapping $\mathbb{Q}_{0}\overline{S}\cong
{\mathcal{M}}(\mathbb{Q},2,2; \overline{P})
\longrightarrow M_{2}(\mathbb{Q}): A \mapsto A\circ
\overline{P}$ is an isomorphism (note that its
kernel consists of nilpotent elements). Hence $P\in
\mbox{GL}_{2}(\mathbb{Q})$. Therefore the regular
matrix can not be equal to $\left(
\begin{array}{ll}
1 & 1 \\ 1 & 1
\end{array}
\right)$. Hence $\overline{P}$ (and thus also $P$) is an upper or lower
triangular matrix. So $P$ is invertible over $\mathbb{Q}G$. Consequently the
mapping $\mathbb{Q}_{0}S\cong {\mathcal{M}}(\mathbb{Q}G,2,2;P)
\longrightarrow M_{2}(\mathbb{Q}G): A \mapsto A\circ \overline{P}$ is an
isomorphism. Theorem~\ref{tfadfh} then yields that $|G|=1$.

Finally, suppose that $nm=2$ and thus $t=1$. So all nilpotent elements
belong $J(\mathbb{Q}_{0}S)$. By symmetry, we then may assume that $n\geq 2$.
Let $g\in G$, $a=(gp_{2,1},1,1)$ and $b=(g,1,2)$. Then, because $p_{1,1}=1$,
$(a-b)^{2}=(gp_{2,1}gp_{2,1},1,1)+(gp_{2,1}g,1,2)-(gp_{2,1}g,1,2) -
(gp_{2,1}gp_{2,1},1,1)=0$. Consequently, $a-b\in J(\mathbb{Q}_{0}(S))$, for
all $g\in G$. Since $dim_{\mathbb{Q}} J(\mathbb{Q}_{0}(S))\leq 1$ this
implies that $|G|=1$.  This finishes the proof.
\end{proof}

Recall that a finite group is said to be a Higman
group if it is either abelian of exponent dividing
$4$ or $6$ or a Hamiltonian $2$-group. These are
precisely the finite groups $G$ so that
${\mathcal{U}}(\mathbb{Z}G)$ is finite (see for
example \cite{seh-book2}). By $C_{n}$ we denote the
cyclic group of order $n$. Put $S_{3}$ the symmetric
group of degree $3$, $D_{4}$ the dihedral group of
order $8$, $Q_{12}$ the quaternion group of order
$12$ and $C_{4}\rtimes C_{4}$ the semi-direct
product where $C_{4}$ acts non-trivially. These four
groups are precisely those non-abelian finite groups
$G$ so that $\mathbb{Q}G$ has the hyperbolic
property. It turns out that
${\mathcal{U}}(\mathbb{Z}G)$ has a free subgroup of
rank $2$ that is of finite index $2$ (\cite{jsp}).
From Dirichlet's Unit it follows that the only
finite abelian groups $H$ with ${\mathbb{Q}} H$
hyperbolic are $C_{5}$, $C_{8}$ and $C_{12}$. These
are the non-trivial finite abelian groups $H$ with
${\mathcal{U}}(\mathbb{Z}H)$ having an infinite
cyclic subgroup of finite index.  Also recall that a
null semigroup is a non-trivial semigroup with zero
in which the product of any two elements is the zero
element $\theta$. Of course such a semigroup is
$0$-simple if it has only one non-zero element.
Finally, if a principal factor of a finite semigroup
$S$ is of the form $G^{0}$, for some group $G$,
then, we simply say that this principal factor is
isomorphic with the group $G$.

We are now ready to prove our main result which,
together with \cite[Theorem $4.8$]{ijsf}, completes
the classification of the finite semigroups $S$
whose contracted semigroup algebra $\mathbb{Q}_{0}
S$ has the hyperbolic property. As said in the
introduction, only the  case of semi-simple
semigroups has to be dealt with. However, because of
Lemma~\ref{reesdescription}, the proof works for all
finite semigroups. So for this reason and because of
complenetess' sake, we give a complete proof.

\begin{theorem}
\label{maintheorem} Let $S$ be a finite semigroup (with zero). The
contracted semigroup algebra ${\mathbb{Q}}_{0}S$ has the hyperbolic property if and only
if all principal factors of $S$, except possibly one, say $K$, are Higman
groups, and $K$ is isomorphic to one of the following simple semigroups:

\begin{enumerate}
\item a null semigroup.

\item ${\mathcal{M}^{0}} (\{ 1 \}, 1, 2,P)$, with $P=\left(
\begin{array}{l}
1 \\ 1
\end{array}
\right)$.

\item ${\mathcal{M}^{0}} (\{ 1 \}, 2, 2,P)$ with $P\in \left\{ \left(
\begin{array}{ll}
1 & 0 \\ 0 & 1
\end{array}
\right), \left(
\begin{array}{ll}
1 & 1 \\ 0 & 1
\end{array}
\right) \right\}$.

\item $C_{5},\; C_{8}$, or $C_{12}$

\item $S_{3},\; D_{4},\; Q_{12}$, or $C_{4}\rtimes C_{4}$.
\end{enumerate}
\end{theorem}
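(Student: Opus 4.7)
My plan is to exploit the principal series of $S$ together with Corollary~\ref{homimage}. For a finite semigroup $S$ with principal series $S = S_1 \supsetneq S_2 \supsetneq \cdots \supsetneq S_k \supsetneq S_{k+1}$, the contracted semigroup algebras $\mathbb{Q}_{0}S_i$ provide a filtration of $\mathbb{Q}_{0}S$ by two-sided ideals whose successive quotients are $\mathbb{Q}_{0}(S_i/S_{i+1})$, the contracted semigroup algebras of the principal factors $K_i := S_i/S_{i+1}$. By Corollary~\ref{homimage} each $\mathbb{Q}_{0}K_i$ is hyperbolic, so I can analyze each principal factor separately and then glue the information back via Theorem~\ref{tfadfh}.

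For the necessity direction, I first classify each individual principal factor $K$. A principal factor is either a null semigroup or $0$-simple. If $K$ is a null semigroup with more than one non-zero element, then $\mathbb{Q}_{0}K$ is a nilpotent algebra of dimension $\ge 2$ whose radical has dimension $\ge 2$, which by Theorem~\ref{tfadfh} cannot occur inside a hyperbolic algebra; so a null principal factor must be as in item~1. Otherwise $K \cong {\mathcal{M}}^{0}(G;m,n;P)$ and Lemma~\ref{reesdescription} leaves three options: $nm=1$ so $K\cong G^{\theta}$ and the analysis reduces to $\mathbb{Q}G$ being hyperbolic, for which I invoke Dirichlet's unit theorem in the abelian case (yielding Higman groups or $C_5,C_8,C_{12}$) and \cite{jsp} in the non-abelian case (yielding Hamiltonian $2$-groups, $S_3,D_4,Q_{12},C_4\rtimes C_4$); $nm=2$ with $G=\{1\}$, giving item~2; or $n=m=2$ with $G=\{1\}$ and $P$ triangular, giving item~3 after normalizing.

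The main obstacle is ruling out two distinct non-Higman principal factors. Here Theorem~\ref{tfadfh} is decisive: each exceptional factor contributes either a non-trivial piece of $J(\mathbb{Q}_{0}S)$ (the null case, or the Rees $T$-case via the nilpotent $e-f$), or a non-division-algebra simple summand of type $M_2(\mathbb{Q})$ (the $n=m=2$ case, or one of $S_3,D_4,Q_{12},C_4\rtimes C_4$), or a division-algebra summand $D_i$ not of the allowed form (the cyclic groups $C_5,C_8,C_{12}$ produce imaginary quadratic summands that are borderline, and two of them would already force a $\mathbb{Z}^2$ in units by Dirichlet). Since Theorem~\ref{tfadfh} caps $\dim_{\mathbb{Q}}J(\mathbb{Q}_{0}S)\le 1$ and permits at most one $B$-slot, two independent exceptional contributions are impossible. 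The bookkeeping is done by case analysis across the items 1--5 and by tracking whether each exception lands in the radical or in a simple quotient.

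For the sufficiency direction, given a semigroup $S$ of the stated form, I would verify that $\mathbb{Q}_{0}S$ satisfies one of the two alternatives of Theorem~\ref{tfadfh}. The Higman principal factors contribute only admissible division-algebra summands $D_i$ after passing to $\mathbb{Q}_{0}S/J(\mathbb{Q}_{0}S)$; the exceptional factor, if any, contributes either the unique $B$-slot (items~3 and 5), the unique admissible $D_i$ beyond the Higman ones (item~4), or a $1$-dimensional radical together with $\mathbb{Q}_{0}S/J(\mathbb{Q}_{0}S)$ of the required form (items~1 and 2, using that $\mathbb{Q}_{0}T^{1}\cong T_2(\mathbb{Q})$ as computed in the introduction). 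Since the group-only and the non-semi-simple-semigroup subcases are already established in \cite[Theorem~4.8]{ijsf}, the essentially new verification is for semi-simple semigroups with a null or $T$-type exceptional factor, which amounts to checking that the Wedderburn decomposition of $\mathbb{Q}_{0}S/J(\mathbb{Q}_{0}S)$ and the $1$-dimensional radical combine correctly.
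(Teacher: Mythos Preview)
Your proposal is correct and follows essentially the same route as the paper: both arguments filter $\mathbb{Q}_0 S$ by the principal series, invoke Corollary~\ref{homimage} and Lemma~\ref{reesdescription} on the principal factors, and then use the slot-counting of Theorem~\ref{tfadfh} (at most one $B$-summand, $\dim_{\mathbb{Q}}J\le 1$, and no $B$ when $J\neq 0$) to force at most one exceptional factor, with the converse handled by splitting off the semisimple bottom ideal. One small correction: the exceptional simple component arising from $C_5$, $C_8$, or $C_{12}$ is the degree-four cyclotomic field $\mathbb{Q}(\zeta_n)$ (unit rank one by Dirichlet), not an imaginary quadratic field, so it sits in the $B$-slot of Theorem~\ref{tfadfh} rather than among the $D_i$; your conclusion that two such factors are forbidden is unaffected.
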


\begin{proof}
Let $S_{n} =\{ \theta \} \subset S_{n-1} \subset
\cdots \subset S_{1}=S$ be a principal series of
$S$. So, the Rees factors $S_{i}/S_{i+1}$, with
$1\leq i\leq n-1$, are the non-trivial principal
factors of $S$. Each of these either is a null
semigroup or a Rees Matrix semigroup
${\mathcal{M}}^{0}(G_{i},m_{i},n_{i};P_{i})$, with
$G_{i}$ a finite group.

First assume that $\mathbb{Q}_{0}(S)$ is semisimple.
It is well known (see for example \cite[Theorem
14.24]{oknbook}) that this occurs precisely when
$n_{i}=m_{i}$ and $P_{i}$ is invertible in
$M_{n_{i}}(\mathbb{Q}G)$, for each $1\leq i\leq n-1$
and it follows that $\mathbb{Q}_{0}S \cong
\oplus_{i=1}^{n-1} \mathbb{Q}_{0}(S_{i}/S_{i+1})$.
So, $\mathbb{Q}_{0}S$ is hyperbolic if and only if
each $\mathbb{Q}_{0}(S_{i}/S_{i+1})$ has the
hyperbolic property. From Theorem~\ref{tfadfh} and
Lemma~\ref{reesdescription} we obtain that all
$n_{i}\leq 2$ and at most one of them, say
$n_{i_{0}}$, equals $2$. Furthermore, if the latter
occurs then $S_{i_{0}}/S_{i_{0}+1}$ is of the type
described in 3. For all other indices $i$, we have
that $n_{i}=1$ and $\mathbb{Z}G_{i}$ must have
finitely many units. Hence $G_{i}$ is a Higman
group. If, on the other hand, all $n_{i}=1$ then all
$\mathbb{Z}G_{i}$ have finite unit group except
possibly one which (by the comments before the
Theorem) is then of the type given in 4. or 5. In
the former case ${\mathbb{Q}} S_{0}$ is a sum of
division rings. In the latter case there is a simple
component of the type $M_{2}(\mathbb{Q})$. This
finishes the proof if $\mathbb{Q}_{0}S$ is
semisimple.

Second, assume $\mathbb{Q}_{0}S$ has the hyperbolic
property but not semi-simple. Choose the smallest
index $i_0$ such that the ideal ${\mathbb{Q}
}_0S_{i_0+1}$ of ${\mathbb{Q}}_{0}S$ has no
nilpotent elements. Note that $i_{0}>1$. As
$\mathbb{Q}_{0}S_{i_{0}}$ is an ideal of
$\mathbb{Q}_{0}S$ with trivial Jacobson radical,
Corollary~\ref{homimage} yields that
$\mathbb{Q}_{0}S \cong \mathbb{Q}_{0}S_{i_{0}}
\oplus \mathbb{Q}_{0}(S/S_{i_{0}})$. Again using
Theorem~\ref{tfadfh} we obtain that
$\mathbb{Q}_{0}S_{i_{0}}$ is hyperbolic and a direct
sum of division algebras of which the unit group of
an order is finite. So, as above, the principal
factors of $S_{i_{0}}$ are all Higman groups. So
$\mathbb{Q}_{0}(S/S_{i_{0}})$ has the hyperbolic
property and is not semisimple. Hence, to prove the
necessity of the conditions, we may assume that
$i_{0}=n-1$.

Suppose $S_{n-1}$ is a null semigroup then
$\mathbb{Q}_{0}S_{n-1}\subseteq J(\mathbb{Q}_{0}S)$
and thus, because of the dimension of the radical,
we have equality. Again, by Theorem~\ref{tfadfh}, we
then also have that
$\mathbb{Q}_{0}S/\mathbb{Q}_{0}S_{n-1}\cong
\mathbb{Q}_{0}(S/S_{n-1})$ is semisimple and
hyperbolic with finitely many units in an order. So,
all principal factors $S_{i}/S_{i+1}$, with $1\geq
i>n-1$ are Higman groups, as desired.

Next, suppose $S_{n-1}$ is not a null semigroup.
Because of Lemma~\ref{reesdescription} we get that
$S_{n-1}$ is as described in 2. So
$\mathbb{Q}_{0}S_{n-1}/J(\mathbb{Q}_{0}S_{n-1})\cong
\mathbb{Q}$ and $\mathbb{Q}_{0}S/J(\mathbb{Q}_{0}S)
\cong \mathbb{Q} \oplus \mathbb{Q}_{0}S/S_{n-1}$.
Theorem~\ref{tfadfh} then again yields that
$\mathbb{Q}_{0}(S/S_{n-1})$ is a sum of division
rings with finitely many units in an order. So, all
principal factors $S_{i}/S_{i+1}$, with $1\leq i\leq
n-2$, are Higman groups. This finishes the proof of
the necessity.

Conversely, suppose that the factors of the
principal series $S_{n} =\{ \theta \} \subset
S_{n-1} \subset \cdots \subset S_{1}=S$ of $S$ are
of the types described and that
$S_{i_{0}}/S_{i_{0}+1}$ is the exceptional factor.
If this principal factor is of type 3. then
$\mathbb{Q}_{0}S $ is semisimple and the result
follows from the first part of the proof. So, assume
$S_{i_{0}}/S_{i_{0}+1}$ is of the type 1. or 2. Now,
$S_{i_{0}+1}$ is a union of groups and therefore
$\mathbb{Q}_{0}S_{i_{0}+1}$ is semisimple and thus
has an identity. Moreover, ${\mathbb{Q}}_{0}S\cong
{\mathbb{Q}}_{0}S_{i_{0}+1}\oplus
{\mathbb{Q}}_{0}(S/S_{i_{0}+1})$ and, by the
semisimple part of the proof, an order in
$\mathbb{Q}_{0}S_{i_{0}+1}$ has finitely many units.
So, we only need to verify that
$\mathbb{Q}_{0}(S/S_{i_{0}+1})$ is hyperbolic. By
the assumptions and Corollary~\ref{reesdescription},
we know that
$J(\mathbb{Q}_{0}(S_{i_{0}}/S_{i_{0}+1})$ is of
dimension one and
$\mathbb{Q}_{0}(S_{i_{0}}/S_{i_{0}+1})/J(\mathbb{Q}_{0}(S_{i_{0}}/S_{i_{0}+1}))\cong
\mathbb{Q}$. The assumptions also imply that
$\mathbb{Q}_{0}(S/S_{i})$ is a direct sum of
division rings. Hence,
$\mathbb{Q}_{0}(S/S_{i_{0}+1})/J(\mathbb{Q}_{0}(S_{i_{0}}/S_{i_{0}+1}))\cong
\mathbb{Q} \oplus \mathbb{Q}_{0}(S/S_{i_{0}+1})$ is
a direct sum of division rings with only finitely
many units in an order. Therefore, $\mathbb{Q}_{0}S$
has the hyperbolic property. %Set ${%
%\overline{S}}:=S/S_{i_{0}+1}$. Write ${\mathbb{Q}}_{0}S\cong {\mathbb{Q}}%
%_{0}S_{i_{0}+1}\oplus
%{\mathbb{Q}}_{0}(S/S_{i_{0}+1})$. Since
%$S_{i_{0}+1}$
%is a union of Higman groups, we just have to describe the structure of ${%
%\mathbb{Q}}_{0}{\overline{S}}$. We have that ${\mathbb{Q}}{\overline{S}}/J({%
%\mathbb{Q}}{\overline{S}})\cong \bigoplus\limits_{j}{\mathbb{Q}}{\mathcal{M}%
%^{0}}(G_{j};n_{j};m_{j};P_{j})/(G_{j};n_{j};m_{j};P_{j})$,
%where the sum is taken over an appropriate index
%set. However all groups involved are Higman groups
%and just one pair $(n_{j},m_{j})\neq (1,1)$. This
%pair corresponds
%to $T$ and, since $J({\mathbb{Q}}T)$ is one dimensional, it follows that $%
%J:=J({\mathbb{Q}}{\overline{S}})$ is also one
%dimensional. It now follows that
%${\mathbb{Q}}{\overline{S}}/J({\mathbb{Q}}{\overline{S}})$
%is a direct sum of division algebras all whose
%${\mathbb{Z}}$-orders have finite unit
%groups. Now let $\Gamma \subset {\mathbb{Q}}{\overline{S}}$ be a ${\mathbb{Z}%
%}$-order. Then $\frac{ \U(\Gamma )}{\Gamma \bigcap
%(1+J)}$ is a finite group. Since $J$ is one
%dimensional, it follows that $\U(\Gamma )$ is a
%hyperbolic group.
\end{proof}

\vspace{.5cm}

%\vspace{.25cm}
\newpage

\noindent Centro de Matem\'atica, Computa\c c\~ao e
Cogni\c c\~ao,\newline Universidade  Federal do
ABC,\newline Rua Catequese, 242, $3^{\circ}$ andar,
Santo  Andr\'e, \newline CEP 09090-400 -
Brasil\newline email: edson.iwaki@ufabc.edu.br

\vspace{.25cm}

\noindent Department of Mathematics, \newline Vrije
Universiteit Brussel,\newline Pleinlaan 2, 1050
Brussel, Belgium\newline email: efjesper@vub.ac.be

\vspace{.25cm}

\noindent Instituto de Matem\'atica e Estat\'\i
stica,\newline Universidade de S\~ao Paulo
(IME-USP),\newline Caixa Postal 66281, S\~ao
Paulo,\newline CEP  05315-970 - Brasil \newline
email: ostanley@ime.usp.br

\vspace{.25cm}

\noindent Escola de Artes, Ci\^encias e
Humanidades,\newline Universidade de S\~ao Paulo
(EACH-USP),\newline Rua Arlindo B\'ettio, 1000,
Ermelindo Matarazzo, S\~ao Paulo, \newline CEP
03828-000 - Brasil \newline email:
acsouzafilho@usp.br

\end{document}